\newtheorem{theorem}{Theorem}[section]
\newtheorem{corollary}[theorem]{Corollary}
\newtheorem{thm}{Theorem}[section]
\newtheorem{lem}[thm]{Lemma}
\newtheorem{definition}[thm]{Definition}
\newtheorem{example}[thm]{Example}
\title{Every toroidal graph without $3$-cycles is odd 7-colorable}
\author{Fangyu Tian$^{1}$\hskip 0.2in  Yuxue Yin$^{1}$}
\address{
$^{1}$\small Department of Mathematics, Central China Normal University, Wuhan, Hubei, China.\\
$^2$\small Department of EE, Tsinghua University, Beijing, China.
}
\thanks{The research of the last author was supported in part by the Natural Science Foundation of China (11728102) and the NSA grant H98230-16-1-0316.}
\email{yinyuxue945@mail.tsinghua.edu.cn}
\begin{document}

\maketitle
\section{Abstract}
Odd coloring is a proper coloring with an additional restriction that every non-isolated vertex has some color that appears an odd number of times in its neighborhood. The minimum number of colors $k$ that can ensure an odd coloring of a graph $G$ is denoted by $\chi_o(G)$. We say $G$ is $k$-colorable if $\chi_o(G)\le k$. This notion is introduced very recently by Petruševski and Škrekovski, who proved that if $G$ is planar then $ \chi_{o}(G) \leq 9 $. 
A toroidal graph is a graph that can be embedded on a torus. Note that a $K_7$ is a toroidal graph, $\chi_{o}(G)\leq7$. 
In this paper, we proved that, every toroidal graph without $3$-cycles is odd $7$-colorable. Thus, every planar graph without $3$-cycles is odd $7$-colorable holds as a corollary. That's to say, every toroidal graph is $7$-colorable can be proved if the remained cases around $3$-cycle is resolved.  
\section{Introduction}
In this paper, all graphs are finite and simple, which means no parallel edges and no loops at their vertices. 
An odd coloring of a graph is a proper coloring with the additional constraint that each non-isolated vertex has at least one color that appears an odd number of times in its neighborhood. We use $c_o(v)$ to denote the color and $C_o(v)$ to denote the set of the odd colors. A graph G is odd $c$-colorable if it has an odd $c$-coloring. The odd chromatic number of a graph G, denoted by $\chi_o(G)$, is the minimum $c$ such that G has an odd $c$-coloring. 

Odd coloring was introduced very recently by Petru$\breve{s}$evski and $\breve{S}$krekovski~\cite{petruvsevski2021colorings}, who proved that planar graphs are odd $9$-colorable and conjecture every planar graph is $5$-colorable. Petr and Portier~\cite{petr2022odd} proved that planar graphs are odd $8$-colorable. Fabrici\cite{fabrici2022proper} proved a strengthening version about planar graphs regarding similar coloring parameters.  
A toroidal graph is a graph that can be embedded on a torus. Note that a $K_7$ is a toroidal graph, $\chi_{o}(G)\geq7$.


In this paper, we prove that every toroidal graph without $3$-cycles is $7$-colorable, as is shown in Theorem~\ref{th1}. If the remained cases around $3$-cycles is resolved, then every toroidal graph is odd $7$-colorable can be proved. 
\begin{theorem}\label{th1}
 If $G$ is a toroidal graph without  $3$-cycles, then $\chi_o(G)\leq 7$. 
\end{theorem}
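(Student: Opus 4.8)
The plan is to argue by contradiction using the discharging method. Suppose $G$ is a counterexample with the fewest vertices; then $G$ is connected with no isolated vertices, every proper subgraph is odd $7$-colorable by minimality, and after fixing an embedding on the torus every face has length at least $4$ since $G$ is triangle-free. Euler's formula on the torus gives $V-E+F=0$, so I would assign the initial charge $\mu(v)=d(v)-4$ to each vertex and $\mu(f)=\ell(f)-4$ to each face. Then
\[
\sum_{v}\mu(v)+\sum_{f}\mu(f)=\sum_v d(v)+\sum_f \ell(f)-4(V+F)=2E+2E-4(V+F)=4(E-V-F)=0.
\]
Because each face has length at least $4$, all face charges are nonnegative, so the entire initial deficit sits on the vertices of degree $2$ (charge $-2$) and degree $3$ (charge $-1$). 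The whole argument thus reduces to controlling these low-degree vertices tightly enough that redistributing charge forces the total to be strictly positive, contradicting the identity above.

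The core of the proof is a collection of reducible configurations. For each one I would delete a small set of vertices, invoke minimality to odd $7$-color what remains, and then show the coloring extends. Since $7$ colors leave slack for the \emph{properness} constraint whenever degrees are small, the binding requirement in every extension is the \emph{parity} (odd-color) condition: when recoloring an uncolored vertex $v$ of small degree one must simultaneously (i) make $c(v)$ differ from all neighbors, (ii) guarantee some color occurs an odd number of times in $N(v)$, and (iii) avoid destroying the odd condition at any already-colored neighbor whose neighborhood multiset changes. The target list of configurations should rule out vertices of degree at most $1$, rule out adjacent pairs of $2$-vertices, and more generally force every $2$- or $3$-vertex to be surrounded by enough high-degree vertices and/or long faces to absorb its deficit.

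With those configurations excluded, I would then design discharging rules that move the surplus of faces (length $\ge 5$) and of high-degree vertices (degree $\ge 5$) onto the incident $2$- and $3$-vertices, for instance by having each face donate a fixed amount across its boundary to each incident deficient vertex and each large vertex donate to its deficient neighbors. The reducibility lemmas are exactly what is needed to certify that every $2$-vertex and $3$-vertex receives enough to reach a nonnegative final charge. Finally, one isolates a guaranteed strictly positive surplus somewhere (a vertex or face that cannot be fully drained), so that $\sum_x \mu^*(x)>0$, contradicting $\sum_x \mu(x)=0$. Since planar graphs embed on the torus, Theorem~\ref{th1} immediately yields the stated corollary for triangle-free planar graphs.

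The \textbf{main obstacle} is the parity bookkeeping in the reducibility step, sharpened by the torus geometry. For a $2$-vertex $v$ with non-adjacent neighbors $x,y$, the inherited coloring of $G-v$ may give $x$ and $y$ the same color, so that every color in $N(v)$ appears an even number of times and $v$ has \emph{no} odd color no matter how $v$ is colored; resolving this forces a local recoloring of $v$ or of a neighbor, and one must verify that such a recoloring does not cascade into breaking a third vertex's odd condition. Moreover, unlike the planar case where the charge identity has a strictly negative right-hand side and one only needs all final charges nonnegative, here the total is exactly $0$, so the discharging must be calibrated to produce a genuine positive surplus. This is what will demand the most careful and complete list of reducible configurations around $2$- and $3$-vertices.
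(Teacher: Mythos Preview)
Your outline has the right global shape---discharging on the torus via the triangle-free Euler identity $\sum_x\mu(x)=0$---but what you have written is a plan, not a proof: the entire substance lies in the list of reducible configurations and the matching discharging rules, and you have specified neither. The paper's argument is long precisely because that list is delicate. Let me flag three concrete places where your sketch either needs real content or would break as stated.

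\textbf{Wrong minimality.} You take a counterexample with fewest vertices. Several of the paper's key reductions work by \emph{splitting} an edge $uv$ with a new $2$-vertex, or by deleting a vertex and inserting auxiliary $2$-paths among its former neighbors; the resulting graph has \emph{more} vertices than $G$, so vertex-minimality gives no inductive leverage there. The paper instead minimizes first the number of $4^+$-vertices, then the number of $4^+$-neighbors of $4^+$-vertices, and only then $|E(G)|$. This bespoke ordering is exactly what makes the edge-splitting lemmas go through.

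\textbf{No $3$-vertices.} You plan to discharge to both $2$- and $3$-vertices, but $3$-vertices are reducible outright: delete $v$, odd $7$-color the rest, and give $v$ a color outside $\{c(v_i),c_o(v_i):i=1,2,3\}$ (at most six colors); odd degree guarantees $v$ itself has an odd color. After this the only deficit sits on $2$-vertices, and the whole fight is between $2$-vertices and their $4^+$-neighbors.

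\textbf{The parity obstacle needs a structural dichotomy.} You correctly identify that extending over a $2$-vertex may force recoloring a neighbor and risk a cascade. The paper's resolution is a notion you have not introduced: call a $4^+$-vertex \emph{convenient} if it has odd degree or has a $2$-neighbor (in either case its odd-color constraint can always be repaired locally). One then proves that two convenient vertices are never adjacent, that a $k$-vertex with $4\le k\le 6$ has at most $2k-7$ neighbors that are $2$-vertices or convenient, and several further reducibility lemmas about $5_3$-vertices, $4$- and $5$-faces with too many $2$-vertices, and $(4_0,4_0,4_0,4_0)$-faces. The discharging (the paper uses $\mu(v)=2d(v)-6$, $\mu(f)=d(f)-6$, dual to your choice) then sends charge from vertices to incident $5^-$-faces and to adjacent $2$-vertices, with non-convenient vertices additionally subsidizing their convenient neighbors. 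Without this convenient/non-convenient bookkeeping there is no way to make the charges close, and your proposal contains no substitute for it.
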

Note that planar graph is a subclass of toroidal graph, the corollary holds anyway.
\begin{corollary}
 If $G$ is a planar graph without  $3$-cycles, then $\chi_o(G)\leq7$.
\end{corollary}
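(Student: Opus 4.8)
The plan is to deduce the corollary directly from Theorem~\ref{th1}, exploiting the fact that the class of planar graphs is contained in the class of toroidal graphs. First I would recall that the torus has genus $1$, so every graph that embeds on the sphere (equivalently, in the plane) also embeds on the torus: one may take a planar drawing of $G$ inside an open disk $D$, and then map $D$ homeomorphically onto a closed disk cut out of the torus. This produces a toroidal embedding of $G$, establishing that every planar graph is a toroidal graph. The inclusion runs in precisely the direction we need, namely \emph{planar implies toroidal}.

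Next I would check that the hypothesis transfers. The condition ``without $3$-cycles'' is an intrinsic property of the abstract graph $G$, referring only to the nonexistence of a subgraph isomorphic to a cycle on three vertices, and is therefore independent of any chosen surface embedding. Likewise, the odd chromatic number $\chi_o(G)$ is defined purely combinatorially—via proper colorings in which every non-isolated vertex sees some color an odd number of times—so its value does not depend on whether we regard $G$ as planar or toroidal. Thus if $G$ is a planar graph with no $3$-cycle, the very same $G$, equipped with the toroidal embedding constructed above, is a toroidal graph with no $3$-cycle having the same odd chromatic number.

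With these two observations in place, the argument is immediate: given a planar graph $G$ without $3$-cycles, it is in particular a toroidal graph without $3$-cycles, so Theorem~\ref{th1} applies and gives $\chi_o(G)\le 7$, which is exactly the asserted bound for the planar graph.

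I do not anticipate any genuine obstacle, since the corollary is simply a specialization of the theorem to a subclass. The only point demanding care is the logical direction of the class inclusion: one must invoke that planar graphs form a \emph{sub}class of toroidal graphs, which is what allows the stronger toroidal statement to subsume the planar one. No discharging argument, reducibility analysis, or Euler-formula computation is required beyond what was already carried out in establishing Theorem~\ref{th1}.
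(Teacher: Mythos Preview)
Your proposal is correct and follows exactly the paper's approach: the corollary is obtained from Theorem~\ref{th1} simply because planar graphs form a subclass of toroidal graphs. You supply more detail (the explicit torus embedding, the observation that ``no $3$-cycles'' and $\chi_o$ are embedding-independent) than the paper, which merely remarks that the corollary ``holds anyway,'' but the underlying argument is identical.
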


Let $V(G),E(G)$ and $F(G)$ denoted the sets of vertices, edges and faces of $G$, respectively. A {\em  $k$-vertex} ({\em  $k^{+}$-vertex} or {\em  $k^{-}$-vertex}) is a vertex of degree $k$ (at least $k$ or at most $k$). Similarly, a {\em  $k$-face} ({\em  $k^{+}$-face} or {\em  $k^{-}$-face}) is a face of degree $k$ (at least $k$ or at most $k$). A {\em  $k$-neighbor} ({\em  $k^{+}$-neighbor} or {\em  $k^{-}$-neighbor}) of $v$ is a $k$-vertex ({\em $k^{+}$-vertex} or {\em  $k^{-}$-vertex}) adjacent to $v$.  For $f\in F(G)$, $f=[v_1v_2\ldots v_k]$ denotes that $v_1,v_2,\ldots,v_k$ are the vertices lying on the boundary of $f$ in clockwise  order.  A $k$-face $[v_1v_2\ldots v_k]$ is called an $(m_1,m_2,\ldots,m_k)$-face if $d(v_i)=m_i$ for $i\in\{1,2,\ldots,k\}$.

For a $4^+$-vertex $v$, we say $v$ is {\em convenient} if $v$ is an odd vertex or an even vertex with  a $2$-neighbor, otherwise $v$ is  {\em non-convenient}.
 A {\em  $k_i$-vertex} is a  $k$-vertex adjacent to $i$ $2$-vertices. A {\em  $k_i$-face} is a  $k$-face incident with $i$ $2$-vertices.

\section{Reduce configure}
Let $G$ be a counterexample to Theorem~\ref{th1} with the minimum number of $4^+$-vertices, and subject to that, the number of $4^+$-neighbors of $4^+$-vertices $G$ is minimized, and subject to these conditions $|E(G)|$ is minimized.
\begin{lem}\label{no-3-v}
 $G$ has no $3$-vertices.
\end{lem}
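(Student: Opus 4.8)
The plan is to argue by reducibility: assuming $G$ contains a $3$-vertex $v$, delete it, invoke the minimality of $G$ to obtain an odd $7$-coloring of the smaller graph, and then show that this coloring always extends back to $v$. First I would set up the deletion. Let $v$ be a $3$-vertex with neighbours $x_1,x_2,x_3$, which are distinct since $G$ is simple, and put $G'=G-v$. As a subgraph of a toroidal graph with no $3$-cycle, $G'$ is again toroidal and $3$-cycle-free, so the minimality hypotheses apply to it once we check that $G'$ strictly precedes $G$ in the chosen order. Deleting a $3$-vertex creates no new adjacencies, so the number of $4^+$-neighbours among $4^+$-vertices cannot increase; likewise the number of $4^+$-vertices cannot increase, since a $5^+$-neighbour of $v$ remains a $4^+$-vertex while a $4$-neighbour merely drops out of the count. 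Meanwhile $|E(G')|=|E(G)|-3$. Hence in the lexicographic order $\bigl(\#\,4^+\text{-vertices},\ \#\,4^+\text{-neighbours of }4^+\text{-vertices},\ |E|\bigr)$ the triple for $G'$ is strictly smaller, and by the minimality of the counterexample $G'$ admits an odd $7$-coloring $\phi$.

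Next I would extend $\phi$ to $v$, and two observations make this painless. Since $\deg(v)=3$ is odd, the colors on $N(v)$ form a multiset of odd total size, so some color must occur an odd number of times there; thus the odd constraint \emph{at} $v$ is automatically met for every choice of color on $v$. Moreover, re-inserting $v$ changes, for each $x_i$, only the multiplicity of the single color assigned to $v$. Consequently, if $v$ avoids the odd color $c_o(x_i)$ that $x_i$ already enjoys in $\phi$, that color retains its odd multiplicity in $N(x_i)$ and $x_i$ stays satisfied; if some $x_i$ happens to be isolated in $G'$ it imposes no such constraint, and after re-adding $v$ its unique neighbour hands it an odd color for free. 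Combining this with properness, the colors forbidden on $v$ lie in $\{\phi(x_1),\phi(x_2),\phi(x_3)\}\cup\{c_o(x_1),c_o(x_2),c_o(x_3)\}$, at most six colors, so one of the seven colors remains available. Coloring $v$ with such a color yields an odd $7$-coloring of $G$, contradicting that $G$ is a counterexample.

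The only genuine obstacle is the neighbour bookkeeping in the extension step: one must ensure that inserting $v$ does not silently destroy the odd condition at some $x_i$. This is precisely why we forbid the colors $c_o(x_i)$ rather than only the adjacent colors $\phi(x_i)$, and why the corner case of an $x_i$ isolated in $G'$ deserves its one-line remark. Everything else is routine — confirming that $G'$ is toroidal and $3$-cycle-free, and that it is strictly smaller in the three-parameter order — and the slack in the count $3+3=6<7$ guarantees the extension never gets stuck.
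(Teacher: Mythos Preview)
Your proposal is correct and follows essentially the same argument as the paper: delete the $3$-vertex, invoke minimality to color $G-v$, and extend by avoiding the at most six colors $\{\phi(x_i),c_o(x_i):i=1,2,3\}$, with the odd constraint at $v$ coming for free from its odd degree. Your write-up is in fact more careful than the paper's about the lexicographic order and the isolated-neighbour corner case, but the method is identical.
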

\begin{proof}
  Suppose that $G$ has a $3$-vertex $v$. We assume that $v_1,v_2$ and $v_3$ are the neighbors of $v$. Let $G'=G-v$. Since the number of $4^+$-vertices and $4^+$-neighbors of $4^+$-vertices in $G'$ does not increase and $G'$ has fewer edges than $G$, $G'$ has an odd $7$-coloring $c'$ by the minimality.   Color each vertex other than $v$ in $G$  with the same color in $G'$ and color  $v$ with $\lbrack 7 \rbrack\setminus\{c'(v_1),c'(v_2),c'(v_3),c'_o(v_1),c'_o(v_2),c'_o(v_3)\}$. Since $v$ is a $3$-vertex, $v$ must have an odd coloring.  Then the coloring of $G'$ can return back to $G$, a contradiction.
\end{proof}

\begin{lem} \label{conven}
If $v$ is convenient, then $v$ always admits an odd coloring.
\end{lem}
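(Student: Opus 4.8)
The plan is to treat the two possibilities in the definition of \emph{convenient} separately: either $v$ has odd degree, or $v$ has even degree together with a $2$-neighbor. In both cases the goal is to produce a color that appears an odd number of times among the neighbors of $v$, i.e.\ to force $C_o(v)\neq\emptyset$.

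First suppose $d(v)$ is odd. Here I would argue that $v$ satisfies the odd condition under \emph{any} proper coloring of its neighbors, so nothing has to be arranged. Group the neighbors of $v$ by their colors; the sizes of these color classes are nonnegative integers summing to $d(v)$. Were every class of even size, the sum would be even, contradicting that $d(v)$ is odd. Hence some color occurs an odd number of times among the neighbors of $v$, so $C_o(v)\neq\emptyset$. This is the easy, coloring-independent half.

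Next suppose $d(v)$ is even and $v$ has a $2$-neighbor $u$. The idea is to exploit the freedom in coloring the low-degree vertex $u$. Since $d(v)$ is even, $d(v)-1$ is odd, so applying the same parity argument to the $d(v)-1$ neighbors of $v$ distinct from $u$ shows that the set $O$ of colors occurring an odd number of times among them is nonempty. Because $u$ has degree $2$, it forbids at most two colors on itself, leaving at least five admissible colors from $[7]$. I would then pick the color of $u$ so as to retain an odd color in the neighborhood of $v$: if $|O|\ge 2$, any admissible choice works, since changing the parity of one color of $O$ still leaves another color odd; if $|O|=1$, it suffices to give $u$ a color different from the unique element of $O$, which is possible since $u$ need avoid only that one color while having at least five available. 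In either case $C_o(v)\neq\emptyset$.

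The step I expect to be the main obstacle is making the chosen color of $u$ compatible with all remaining requirements at once: it must be proper, it must leave $u$ itself with an odd color, and it must realize the parity conclusion above for $v$. The first and third demands are handled by the counting, but the odd condition for $u$ depends on its two neighbors $v$ and $w$ receiving \emph{distinct} colors rather than on the color of $u$ itself, so I would have to confirm within the minimal-counterexample setup that such a $2$-vertex never has both neighbors identically colored, or recolor $w$ if it does. Verifying that the ample gap between the seven available colors and the at most two constraints imposed by a degree-$2$ vertex lets all these demands be met simultaneously is where the argument needs the most care.
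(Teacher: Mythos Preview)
Your two-case split and the parity argument for odd degree are exactly what the paper does, and recoloring the $2$-neighbor $u$ in the even case is also the paper's move. The gap is in your list of constraints on the new color of $u$: you name properness (avoid $c(v)$ and $c(w)$), the odd condition at $u$ itself, and the odd condition at $v$, but you omit the odd condition at $w$, the \emph{other} neighbor of $u$. When you change the color of $u$ you flip the parity of two color classes in $N(w)$, and this can destroy $C_o(w)$. The paper handles this by additionally forbidding $c_o(w)$ (in its notation $c_o(u)$), so that one designated odd color of $w$ survives the recoloring; together with $c(v)$, $c(w)$, and the one color you need to avoid for the parity at $v$, that is only four forbidden colors, so a legal choice in $[7]$ still exists. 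Your count of ``at most two constraints imposed by a degree-$2$ vertex'' is therefore off by one.

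Conversely, the issue you flag as the main obstacle --- whether $c(v)\neq c(w)$ so that $u$ itself has an odd color --- is not really a concern in the way the lemma is used. In every application the ambient coloring is already an odd $7$-coloring of a graph in which $u$ is present as a $2$-vertex with neighbors $v$ and $w$; hence $c(v)\neq c(w)$ holds from the start and is untouched by recoloring $u$. There is no need (and it would be dangerous) to contemplate recoloring $w$.
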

\begin{proof}
By the definition of convenient vertex, $v$ is either an odd vertex or an even vertex
with a $2$-neighbor. If $v$ is an odd vertex, then $v$ always admits an odd coloring. If $v$ is an even vertex with a $2$-neighbor $v'$, then $v$ always admits an odd coloring via recoloring $v'$ with
a color in $[7]\setminus  \{c_o(v), c(v),  c(u), c_o(u)\}$, where $u$ is the other neighbor of $v'$ and $c_o(v)\neq c_o(v')$. If $c_o(v)= c_o(v')$, keep the color of $v$.
 \end{proof}


\begin{lem}\label{tool} (Tool Lemma 2) The following statements hold:
\begin{enumerate}[(1)]
   \item Any two $2$-vertices are not adjacent.
    \item Any two convenient vertices are not adjacent.
    \item A $k$-vertex is adjacent to at most  $2k-7$ $2$-vertices or convenient vertices, where $4\leq k \leq 6$.
   \end{enumerate}
\end{lem}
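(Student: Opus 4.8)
The plan is to prove all three parts as reducibility statements about the minimal counterexample $G$: in each case I exhibit a graph $G^{*}$ that is strictly smaller in the chosen ordering (first the number of $4^{+}$-vertices, then the number of $4^{+}$-neighbors of $4^{+}$-vertices, then $|E(G)|$), so that $G^{*}$ has an odd $7$-coloring by minimality, and then I extend that coloring back to $G$ to reach a contradiction. Two facts drive every extension: a $2$-vertex has only two neighbors and so can be (re)colored almost freely; and Lemma~\ref{conven}, which grants a convenient vertex its odd color for free once the coloring is proper. The only parity observation needed is that adding or deleting a single neighbor of color $\alpha$ at a vertex $w$ flips the parity of $\alpha$ in $N(w)$ and nothing else, so $w$ keeps an odd color unless $\alpha$ was the \emph{unique} odd color of $w$.

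For (1), let $u,v$ be adjacent $2$-vertices with second neighbors $x$ and $y$; triangle-freeness gives $x\neq y$. Delete both and set $G^{*}=G-\{u,v\}$, which has fewer edges and no more $4^{+}$-vertices, hence is odd $7$-colorable. I recolor the path $x\,u\,v\,y$: choose $c(u)$ avoiding $c(x)$, $c(y)$ and the unique odd color of $x$ (if any), then $c(v)$ avoiding $c(u)$, $c(x)$, $c(y)$ and the unique odd color of $y$ (if any). With seven colors these choices exist; the parity observation keeps $x$ and $y$ odd, while $u,v$ each see two distinct neighbor colors and are therefore odd. For (2), let $u,v$ be adjacent convenient vertices and put $G^{*}=G-uv$: if some endpoint has degree $4$ then $G^{*}$ loses a $4^{+}$-vertex, and otherwise both endpoints stay $4^{+}$ and the deletion destroys two $4^{+}$-to-$4^{+}$ adjacencies, so $G^{*}$ is smaller in the second parameter. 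Either way $G^{*}$ has an odd $7$-coloring $c'$, and only the neighborhoods of $u$ and $v$ differ between $G^{*}$ and $G$. If $c'(u)\neq c'(v)$ the coloring is already proper on $G$, and since $u,v$ are convenient I restore their odd colors by Lemma~\ref{conven} (the two applications, if they recolor a $2$-neighbor, touch distinct vertices by triangle-freeness); if $c'(u)=c'(v)$ I must first repair properness by a controlled recoloring of one endpoint.

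For (3), suppose a $k$-vertex $v$ with $4\le k\le 6$ has at least $2k-6$ neighbors that are $2$-vertices or convenient; write $t,s,g$ for the numbers of $2$-vertex, convenient, and remaining (non-convenient $4^{+}$) neighbors, so that $t+s+g=k$ and $g\le 6-k$. Delete $v$: since $v$ is a $4^{+}$-vertex, $G^{*}=G-v$ has fewer $4^{+}$-vertices and so admits an odd $7$-coloring $c'$. I re-add $v$ and choose its color $\alpha$: for properness $\alpha$ must avoid the colors of the $s+g$ many $4^{+}$-neighbors, and to protect the odd color of each good neighbor $\alpha$ must avoid that neighbor's unique odd color, at most $g$ further colors. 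Since $s+2g=(s+g)+g\le k+(6-k)=6$, at least one color for $\alpha$ always survives. The $2$-neighbors are then recolored to avoid $\alpha$ and their far neighbor, the convenient neighbors regain their odd color by Lemma~\ref{conven}, and only the odd color of $v$ itself remains to be arranged.

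I expect the main obstacle to lie precisely in the final repairs, not in the counting $s+2g\le 6$. Two places force a recoloring of a $4^{+}$-vertex rather than of a flexible $2$-vertex: restoring properness in (2) when $c'(u)=c'(v)$, and, in (3), forcing an odd color into $N(v)$ when $v$ is non-convenient (even degree, no $2$-neighbor, so no neighbor is cheaply recolorable). In each case one must verify simultaneously that a free color exists at the recolored $4^{+}$-vertex and that the recoloring does not destroy the odd color of any vertex on its far side; controlling this parity propagation, and checking that the several local fixes can be carried out without mutual conflict, is the delicate step of the argument.
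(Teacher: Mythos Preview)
Your approaches to (1) and (3) are essentially the paper's: delete and recolor, counting at most six forbidden colors for the reinserted vertex. One small omission in your (3): when you recolor a $2$-neighbor $v'$ of $v$ with far neighbor $u'$, you must also have $\alpha\neq c(u')$, or else $v'$ sees one color twice and loses its odd color; this adds $t$ further constraints, but since $t+s+2g=k+g\le 6$ the count still closes. Your worry about arranging an odd color for $v$ itself when $v$ is even with no $2$-neighbor is legitimate, but the paper's proof does not address it either, so this is not a point where your argument falls short of the paper's.

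The genuine gap is in (2). Deleting the edge $uv$ can indeed leave $c'(u)=c'(v)$, and your proposed repair---recolor one convenient endpoint---does not work in general: a convenient vertex may have arbitrarily large degree, so there is no bound on how many colors you must avoid (its neighbors' colors for properness, plus their odd colors to avoid destroying them), and seven colors need not suffice. The paper bypasses this with a different reduction: instead of deleting $uv$, it \emph{subdivides} $uv$ by inserting a new $2$-vertex $w$. The resulting $G'$ has the same number of $4^{+}$-vertices but strictly fewer $4^{+}$--$4^{+}$ adjacencies (both $u$ and $v$ lose one), so minimality applies and $G'$ has an odd $7$-coloring $c'$. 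Now the point: since $w$ is a $2$-vertex and must have an odd color, its two neighbors are forced to satisfy $c'(u)\neq c'(v)$ automatically. One then drops $w$, keeps all other colors, and invokes Lemma~\ref{conven} to restore odd colors at the convenient vertices $u$ and $v$. This subdivision trick is exactly the missing idea in your plan; with it, no recoloring of a $4^{+}$-vertex is ever needed in (2), and the ``delicate step'' you anticipated disappears.
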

\begin{proof}
\begin{enumerate}[(1)]
    \item Suppose otherwise that two $2$-vertices $u$ and $v$ are adjacent. Let $z'$ be the other neighbor of $z$ for $z\in\{u,v\}$. Let $G'=G-v$. Then $G'$ has an odd $7$-coloring $c'$ by the minimality of $G$.   Color each vertex other than $v$ in $G$ with the same color in $G'$ and color $v$ with $[7]\setminus\{c'(u),c'(v'),c_o'(v'),c'(u')\}$.
    Then the odd $7$-coloring of $G'$ can return back to $G$,  a contradiction.

\item Suppose otherwise that two convenient vertices $u$ and $v$ are adjacent. By the definition of convenient vertices, $u$ and $v$ are $4^+$-vertices. Let $G'$ be the graph obtained from $G$ by
splitting edge $uv$ with a $2$-vertex $w$. Since the number of $4^+$-vertices in $G'$ does not increase and $4^+$-vertices $u$ and $v$ have fewer $4^+$-neighbors in
$G'$ than $G$, there is an odd $7$-coloring $c'$ of $G'$ by the minimality of $G$. Note that $c'(u)  \neq c'(v)$
since $w$ is a $2$-vertex. Let $c(z) = c'(z)$ for $z \in V(G)$. Then $c$ is an odd coloring of $G$ since
$u$ and $v$ has an odd coloring by Lemma \ref{conven}.
   \item Suppose otherwise that a $k$-vertex $v$ is adjacent to at least $2k-6$ $2$-vertices or   convenient vertices  where $4\leq k \leq 6$. 
       Let $G'=G-v$. Then $G'$ has an odd $7$-coloring $c$ by the minimality of $G$.
 We want to color $v$ such that $v$ admits an odd $7$-coloring while each other $4^+$-vertex in $G$ has the same color as in $G'$ and some $2$-vertices can be recolored to ensure its neighbors' proper and odd coloring.
Note that if $v$ has a $2$-neighbor $v'$, then we only need to avoid the color of the other neighbor of $v'$ in  coloring $v$. After that, recolor $v'$ with the color  different from $v$ and the other neighbor of $v'$.
   If $v$ has a convenient neighbor $v'$, then we only need to avoid the color of   $v'$ in  coloring $v$ since $v'$ always has odd coloring by Lemma \ref{conven}.    If $v$ has a neighbor $v'$ which is neither $2$-neighbor nor convenient neighbor, then we need to avoid two colors ($c(v')$ and $c_o(v')$)   in  coloring $v'$.
   Thus, $v$ has at most $(2k-6)+2(k-(2k-6))=6$ forbidden colors,
   and there is one color left for $v$. Then the coloring of $G'$ can return back to $G$, a contradiction.
   \end{enumerate}
  \end{proof}
  \begin{lem}\label{4-f}
   If $f$ is a $4$- or $5$-face, then $f$ is incident with at most one $2$-vertex.
  \end{lem}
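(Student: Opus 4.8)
The plan is to argue by contradiction: assume a $4$- or $5$-face $f$ carries at least two $2$-vertices and then build an odd $7$-coloring of $G$, contradicting minimality. First I would pin down the local structure. By Lemma~\ref{tool}(1) the $2$-vertices on $f$ form an independent set, so on a $4$-face the two $2$-vertices are opposite, say $v_1$ and $v_3$, and each, having degree $2$, has its two face-neighbours $v_2,v_4$ as its \emph{only} neighbours; thus $v_2$ and $v_4$ are common neighbours of both $v_1$ and $v_3$. On a $5$-face the two $2$-vertices lie at distance two, say $v_1$ and $v_3$ with the single common neighbour $v_2$, while $v_1$ also sees $v_5$ and $v_3$ also sees $v_4$. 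In both cases the non-$2$ face-vertices are $4^+$-vertices: they are not $2$-vertices by Lemma~\ref{tool}(1) and not $3$-vertices by Lemma~\ref{no-3-v}. Simplicity makes the listed vertices distinct, and the absence of $3$-cycles keeps $v_2,v_4$ non-adjacent.

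The engine of the proof is a single deletion. Remove one $2$-vertex $u=v_1$ and set $G'=G-u$. Deleting a $2$-vertex does not increase the number of $4^+$-vertices nor the number of $4^+$-neighbours of $4^+$-vertices, and it strictly decreases $|E(G)|$, so $G'$ precedes $G$ in the minimality order and admits an odd $7$-coloring $c'$. The point of retaining the \emph{other} $2$-vertex $w=v_3$ is that $w$ is still a non-isolated $2$-vertex of $G'$, hence under $c'$ its two neighbours receive distinct colours; this hands us one colour inequality for free.

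For the $4$-face this free inequality is exactly the one we need. Here $w$'s neighbours are $v_2,v_4$, so $c'(v_2)\neq c'(v_4)$, and these are precisely the two neighbours of $u$. Re-insert $u$ with any colour $a\notin\{c'(v_2),c'(v_4),c'_o(v_2),c'_o(v_4)\}$, which forbids at most four of the seven colours and so leaves a legal choice. Then $u$ is properly coloured; $u$ is a $2$-vertex whose two neighbours carry the distinct colours $c'(v_2),c'(v_4)$, so it has an odd colour; and since $a$ avoids the witness colours $c'_o(v_2),c'_o(v_4)$, adding the single neighbour $u$ does not change the parity of those witnesses, so $v_2$ and $v_4$ keep odd colours. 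Every other vertex is untouched, so $c'$ extends to an odd $7$-coloring of $G$, the desired contradiction; hence a $4$-face has at most one $2$-vertex.

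The $5$-face is where the argument becomes delicate, and this is the step I expect to be the main obstacle. Now $u=v_1$ and $w=v_3$ share only $v_2$, so the inequality supplied by $w$ is $c'(v_2)\neq c'(v_4)$, whereas re-inserting $u$ (with neighbours $v_2,v_5$) demands $c'(v_2)\neq c'(v_5)$. If these agree (the easy case $c'(v_2)\neq c'(v_5)$), I insert $u$ exactly as above, picking $a\notin\{c'(v_2),c'(v_5),c'_o(v_2),c'_o(v_5)\}$. The problematic case is $c'(v_2)=c'(v_5)$: then $u$'s two neighbours clash and $u$ cannot be made odd directly. My plan is to first recolour the unshared neighbour $v_5$---a $4^+$-vertex---to a colour distinct from $c'(v_2)$ and then fall back to the easy case. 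Making this recolouring legal is the crux: one must keep $v_5$ properly coloured and, more importantly, preserve the odd colours of $v_5$ and of all of its neighbours. I would control this with the convenience machinery of Lemma~\ref{conven} together with the surplus of colours---either a Kempe-type exchange at $v_5$, or recolouring $v_2$ instead and repairing it through its guaranteed $2$-neighbour $v_3$---reducing every sub-case to an instance in which $u$'s two neighbours differ. Once that inequality is secured the insertion of $u$ goes through verbatim, yielding the contradiction and completing the $5$-face case.
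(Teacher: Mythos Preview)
Your $4$-face argument is correct and matches the paper's proof essentially verbatim: delete $v_1$, use the surviving $2$-vertex $v_3$ to force $c'(v_2)\neq c'(v_4)$, and extend.

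For the $5$-face, however, you are working much harder than necessary and you leave a genuine gap. Your plan hinges on the case $c'(v_2)=c'(v_5)$, where you propose to recolour $v_5$ (or $v_2$) and then ``repair'' odd colours via Lemma~\ref{conven} or a Kempe-type exchange. But recolouring a $4^+$-vertex like $v_5$ can destroy the odd-colour witnesses of \emph{all} of its neighbours, not just the ones on $f$; Lemma~\ref{conven} lets you fix $v_5$ itself through a $2$-neighbour, but it does not automatically restore odd colours at $v_5$'s other neighbours. You acknowledge this as ``the crux'' without actually resolving it, so as written the $5$-face case is incomplete.

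The paper sidesteps the whole extension argument with a one-line structural observation you overlooked. On the $5$-face $[v_1v_2v_3v_4v_5]$ with $v_1,v_3$ the two $2$-vertices, look at $v_4$ and $v_5$: each is a $4^+$-vertex (by Lemma~\ref{no-3-v} and Lemma~\ref{tool}(1)) that has a $2$-neighbour on the face ($v_3$ and $v_1$ respectively), so both are convenient by definition. But $v_4v_5$ is an edge of $f$, contradicting Lemma~\ref{tool}(2). No coloring extension is needed at all.
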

\begin{proof}
First suppose that $4$-face $f=[v_1v_2v_3v_4]$ is incident with two $2$-vertices. By Lemma \ref{tool}(1), we assume that $v_1$ and $v_3$ are $2$-vertices. Let $G'=G-v_1$. Then $G'$ has an odd $7$-coloring $c$.   Color each vertex other than $v_1$ in $G$  with the same color in $G'$ and color  $v$ with $\lbrack 7 \rbrack\setminus\{c(v_2),c(v_4),c_o(v_2),c_o(v_4)\}$. Since $v_3$ is a $2$-vertex and $v_3$ has at least one odd coloring, $c(v_2)\neq c(v_4)$. Then $v_1$ also has an odd coloring.  Then the coloring of $G'$ can return back to $G$, a contradiction.

  Next suppose  that $5$-face $f = [v_1v_2v_3v_4v_5]$ is incident with two $2$-vertices. By Lemma \ref{tool}(1), we
assume that $v_1$ and $v_3$ are $2$-vertices.  Then $v_5$ and $v_4$ are convenient,  which contradicts Lemma \ref{tool}(2).
\end{proof}
  
\begin{lem}\label{n-c-4-v}
  If  $v$ is  a non-convenient $4$-vertex and $v$ is adjacent to exactly one convenient vertex, then  $v$ is adjacent to three non-convenient $6^+$-vertices.
\end{lem}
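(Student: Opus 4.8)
The plan is to argue by contradiction using the minimality of $G$. Write the neighbors of $v$ as $v_1,v_2,v_3,v_4$, where $v_1$ is the unique convenient neighbor and $v_2,v_3,v_4$ are non-convenient. Since $G$ has no $3$-vertices (Lemma~\ref{no-3-v}) and $v$ is non-convenient, so it has no $2$-neighbor, every non-convenient neighbor of $v$ has even degree at least $4$, hence is either a $4$-vertex or a $6^+$-vertex. Thus it suffices to show that none of $v_2,v_3,v_4$ is a $4$-vertex, so I suppose to the contrary that, say, $v_2$ is a (non-convenient) $4$-vertex. A key structural point I would record first is that, because $G$ has no $3$-cycles, the adjacent $4$-vertices $v$ and $v_2$ have no common neighbor; in particular $\{v_1,v_3,v_4\}$ is disjoint from the three neighbors of $v_2$ other than $v$. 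This disjointness will let the repairs around $v$ and around $v_2$ be carried out independently.

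For the reduction I would delete the edge $vv_2$ and set $G'=G-vv_2$. Then $v$ and $v_2$ both drop to $3$-vertices while all other degrees are unchanged, so the number of $4^+$-vertices strictly decreases; as $G'$ is again a toroidal graph without $3$-cycles, the minimality of $G$ gives $G'$ an odd $7$-coloring $c$. Now I would try to keep $c$ unchanged and re-add the edge $vv_2$. The crucial bookkeeping is that the only vertices whose neighborhoods change from $G'$ to $G$ are $v$ and $v_2$: every other vertex (in particular each of $v_1,v_3,v_4$ and the three neighbors of $v_2$) keeps exactly the same neighborhood and the same colors, so it retains its odd color automatically. Hence only three conditions can fail: properness on $vv_2$, i.e. $c(v)\neq c(v_2)$; the odd-color condition at $v$; and the odd-color condition at $v_2$. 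For the convenient neighbor $v_1$ nothing extra is needed, since by Lemma~\ref{conven} its odd color can always be restored (by recoloring its $2$-neighbor if $v_1$ is even), independently of the color eventually placed on $v$.

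To finish I would repair the possible failures by recoloring only $v$ and/or $v_2$, exploiting that each was a $3$-vertex in $G'$ and so is automatically odd there, and that (by the no-common-neighbor observation) recoloring $v$ only disturbs $v_1,v_3,v_4$ while recoloring $v_2$ only disturbs its own other neighbors. The odd-color condition at $v$ depends only on $c(v_2)$ together with the fixed colors $c(v_1),c(v_3),c(v_4)$, and symmetrically the condition at $v_2$ depends only on $c(v)$; since adding a single neighbor to a former $3$-vertex destroys oddness for at most one color of that neighbor, each failure forbids at most one value and the two repairs decouple. The main obstacle, and the place where the hypothesis of \emph{exactly one} convenient neighbor is used, is the tightness of the color budget when recoloring one of the non-convenient $4$-vertices: unlike a convenient vertex, $v$ (resp.\ $v_2$) cannot invoke Lemma~\ref{conven}, so after changing its color I must simultaneously keep it proper against its up to three neighbors, restore its own odd color across the re-added edge, and preserve the odd colors of its non-convenient neighbors $v_3,v_4$ (resp.\ those of $v_2$). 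Counting these constraints shows the worst case is exactly when all three of $v_2,v_3,v_4$ are non-convenient, and the heart of the argument is to verify, using the disjointness of the two neighborhoods and the automatic oddness of a former $3$-vertex, that a legal color always remains for each repair, contradicting the choice of $G$.
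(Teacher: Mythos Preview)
Your reduction $G'=G-vv_2$ is natural, but the repair step does not close. The issue is not properness or bookkeeping; it is the odd-color condition at $v$, which depends only on the multiset $\{c(v_1),c(v_2),c(v_3),c(v_4)\}$ and cannot be fixed by recoloring $v$. Nothing in your $G'$ forces $c(v_1),c(v_3),c(v_4)$ to be pairwise distinct, so there may be exactly one value $b_d\in\{c(v_1),c(v_3),c(v_4)\}$ of $c(v_2)$ that kills every odd color at $v$. To avoid $b_d$ you must recolor $v_2$; but $v_2$ is a non-convenient $4$-vertex whose three other neighbours $v_2',v_2'',v_2'''$ may all be non-convenient, so recoloring $v_2$ already forbids the six colors $\{c(v_2'),c(v_2''),c(v_2'''),c_o(v_2'),c_o(v_2''),c_o(v_2''')\}$. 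Since old $c(v_2)\notin$ this set (by properness and by choosing each $c_o\neq$ old $c(v_2)$), it is perfectly possible that these six colors are exactly $[7]\setminus\{\text{old }c(v_2)\}$ while $b_d=\text{old }c(v_2)$; then all seven colors are forbidden for $c(v_2)$ and the repair fails. A concrete scenario: $c(v_1)=c(v_3)=1$, $c(v_4)=2$, old $c(v_2)=2$, $c(v_2'),c(v_2''),c(v_2''')=3,4,5$, $c_o(v_2'),c_o(v_2''),c_o(v_2''')=6,7,1$. Here $b_d=2$ and the forbidden set for $c(v_2)$ is $\{1,3,4,5,6,7\}\cup\{2\}=[7]$.

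The paper sidesteps this by a stronger reduction: it deletes \emph{both} $v$ and $v_2$ and adds $2$-paths $v_1x_1v_3$, $v_1x_2v_4$, $v_3x_3v_4$ (and similarly among $v_2',v_2'',v_2'''$). The inserted $2$-vertices force $c(v_1),c(v_3),c(v_4)$ pairwise distinct and $c(v_2'),c(v_2''),c(v_2''')$ pairwise distinct in $G'$. With three distinct colors among the remaining neighbours, each of $v$ and $v_2$ automatically has an odd color in $G$ regardless of the other's color, so one can color $v_2$ from $[7]\setminus\{c(v_2'),c_o(v_2'),c(v_2''),c_o(v_2''),c(v_2'''),c_o(v_2''')\}$ and then $v$ from $[7]\setminus\{c(v_1),c(v_2),c(v_3),c_o(v_3),c(v_4),c_o(v_4)\}$. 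This ``triangle of $2$-paths'' trick to manufacture distinctness is the missing idea in your plan.
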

\begin{proof}
By Lemma~\ref{tool}, a $4$-vertex is incident with at most one convenient vertex. Thus if $v$ is adjacent to exactly one convenient vertex, then the other three vertices are non-convenient vertices. Suppose that $v$ is adjacent to one non-convenient $5^-$-vertex. Note that $5$-vertex is convenient by definition, this non-convenient vertex must be a $4$-vertex. Let $v_1,v_2,v_3$ and $v_4$ be the neighbors of $v$, $v_1$ be a convenient vertex,  $v_2,v_3,v_4$ be non-convenient  vertices and $v_2$ be  a $4$-vertex.  Let $v_2',v_2'',v_2'''$ be other neighbors of $v_2$. Let $G'$ be the graph obtained from $G-\{v,v_2\}$ by adding $2$-paths $v_1x_1v_3,v_1x_2v_4,v_3x_3v_4,v_2'y_1v_2'',v_2''y_2v_2''',v_2'''y_3v_2'$ where $x_i$ and $y_j$ are $2$-vertices for $i,j=1,2,3$. Since $G'$ has fewer $4$-vertices than $G$, $G'$ has an odd $7$-coloring $c$ by the minimality of $G$. Since $x_i$ and $y_j$ are $2$-vertices for $j=1,2,3$, $c(v_1)\neq c(v_3)\neq c(v_4)$ and  $c(v_2')\neq c(v_2'')\neq c(v_2''')$. Color $v_2$ with color in $[7]\setminus \{c(v_2'),c_o(v_2'),c(v_2''),c_o(v_2''),c(v_2'''),c_o(v_2''')\}$. Then $v_2$ is proper and always have an odd color regardless the color of $v$. Then color $v$ with color in $[7]\setminus \{c(v_1),c(v_2),c(v_3),c_o(v_3),c(v_4),c_o(v_4)\}$.  Then $v$ is proper and always have an odd color since $c(v_1)\neq c(v_3)\neq c(v_4)$. Then $G$ has an odd $7$-coloring, a contradiction.
\end{proof}

 \begin{lem}\label{6-v}
  If  $v$ is  a non-convenient $k$-vertex and $v$ is adjacent to at least $\frac{3k}{2}-5$ convenient vertices, then  none of  convenient neighbors of $v$ is a $5_3$-vertex for $k=4,6,8,10$.
\end{lem}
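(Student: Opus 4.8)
The plan is to argue by reducibility inside the fixed minimal counterexample $G$. Suppose toward a contradiction that $v$ is a non-convenient $k$-vertex with $k\in\{4,6,8,10\}$, that $v$ has at least $\frac{3k}{2}-5$ convenient neighbors, and that one of them, call it $u$, is a $5_3$-vertex. Write the five neighbors of $u$ as three $2$-vertices $a_1,a_2,a_3$ together with $v$ and one further vertex $x$, and for each $i$ let $a_i'$ be the second neighbor of $a_i$. Since $v$ is non-convenient it is even and has no $2$-neighbor, so a priori each of its convenient neighbors forbids one color at $v$ (its own color), while each remaining non-convenient $4^+$-neighbor forbids the two colors $c(\cdot)$ and $c_o(\cdot)$; note also that any two neighbors of $v$ are non-adjacent because $G$ is triangle-free.

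First I would build a smaller graph $G'$. Delete $v$ and, to ensure that $v$ will receive an odd color once it is reinserted, add $2$-paths (edges subdivided by new $2$-vertices) among a suitable set of former neighbors of $v$ so as to constrain their colors, exactly as in Lemma~\ref{n-c-4-v}; because $G$ has no $3$-cycle these endpoints are non-adjacent, so the additions create no triangle. This operation removes one $4^+$-vertex ($v$) and only inserts $2$-vertices, so $G'$ has strictly fewer $4^+$-vertices than $G$; by the minimality of $G$ it is not a counterexample and hence carries an odd $7$-coloring $c$. In $G'$ the vertex $u$ has dropped to a $4$-vertex with three $2$-neighbors, which is harmless since the structural lemmas constrain only $G$ itself.

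Next I would transfer $c$ back to $G-v$ and color $v$. The convenient neighbors cost little: by Lemma~\ref{conven} each of them repairs its own odd coloring after $v$ is inserted, and since they are pairwise non-adjacent they may even be recolored onto a few shared colors, so collectively they block only a small number of colors at $v$. The decisive saving comes from $u$ being a $5_3$-vertex: its three $2$-neighbors $a_1,a_2,a_3$ are freely recolorable, so once $v$ is colored we can recolor $u$ itself to any color in $[7]\setminus\{c(v),c(x),c(a_1),c(a_2),c(a_3)\}$ and then reset $a_1,a_2,a_3$ to restore properness and the odd condition at $u$ and at the $a_i'$. In effect $u$ forbids no color at $v$, which is an extra unit of slack beyond the generic count. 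Combining this with the forced color structure supplied by the added $2$-paths (which, via the parity of $N(v)$, guarantees a color appearing an odd number of times at $v$) and with the bound of at least $\frac{3k}{2}-5$ convenient neighbors, I would verify that at most $6$ colors are forbidden at $v$, leaving an admissible choice; reinstating it yields an odd $7$-coloring of $G$, the desired contradiction.

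The main obstacle is the bookkeeping in the tight cases, not the overall idea. The worst situation is $k=6$ with exactly four convenient neighbors and two non-convenient $4^+$-neighbors, where the generic forbidden count is largest; there one must combine the $5_3$-slack with the freedom to collapse the convenient neighbors onto repeated colors while still guaranteeing that $v$ sees some color an odd number of times. Checking that a single recolorable $5_3$-vertex delivers enough room uniformly for $k=4,6,8,10$, while keeping every $2$-vertex, every convenient neighbor (via Lemma~\ref{conven}), and the vertex $x$ properly and oddly colored, is the delicate step; one must also confirm that the added $2$-paths leave $G'$ toroidal and triangle-free so that minimality legitimately applies.
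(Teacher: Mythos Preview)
Your reduction is different from the paper's, and in its present form it does not close. The paper does \emph{not} delete $v$; it merely subdivides the edge $vv_1$ (your $vu$) by a new $2$-vertex $z$. This keeps the number of $4^+$-vertices fixed but decreases the number of $4^+$--$4^+$ adjacencies, so the second minimality criterion applies. Crucially, $v$ already carries a color $c(v)$ from $G'$; the only possible failure in $G$ is that $v$ may lack an odd color. The paper then argues: if $v$ has no odd color, its $k$ neighbors use exactly $k/2$ colors in pairs; one now tries to recolor $v_1$ (the $5_3$-vertex) off that palette, and if the five constraints at $v_1$ block all of $\{3,\dots,7\}$, one instead recolors $v$ from $[7]\setminus\{1,\dots,\tfrac{k}{2}+1,c_o(v_{\frac{3k}{2}-4}),\dots,c_o(v_k)\}$ and sets $v_1$ to the old color $1$ of $v$, so that $1$ is now odd at $v$. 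The point is that the set to avoid has size exactly $\tfrac{k}{2}+1+(k-(\tfrac{3k}{2}-5))=6$, uniformly for $k\in\{4,6,8,10\}$.

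Your scheme, by contrast, asks for a fresh color at $v$ and tries to bound the forbidden set by~$6$. That already fails in the case you yourself flag: for $k=6$ with two non-convenient neighbors (contributing $2\cdot 2=4$ forbidden colors) and three convenient neighbors besides $u$ (contributing $3$), even granting that $u$ forbids nothing you reach $7$, with the odd constraint at $v$ still unaccounted for. Your suggested remedy --- recoloring the convenient neighbors onto shared colors --- is not carried out and is not obviously available: those neighbors have their own $4^+$-neighbors whose proper and odd constraints you would have to track. Two further issues remain unaddressed: (i)~you never specify which $2$-paths to add for $k\in\{6,8,10\}$, nor why the resulting graph stays toroidal (for $k=10$ you cannot route $\binom{10}{2}$ paths through the face vacated by $v$); (ii)~your claim that $u$ ``forbids no color'' ignores that recoloring $u$ can destroy the odd color at its fifth neighbor $x$, so $u$ is constrained by $c(x)$ \emph{and} $c_o(x)$, not just $c(x)$. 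The paper's edge-subdivision avoids all of this: it never needs to embed extra paths, never needs to choose $c(v)$ from scratch, and exploits the paired-color structure forced by ``$v$ has no odd color'' rather than a raw forbidden-color count.
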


\begin{proof}
Suppose otherwise  one of  convenient neighbors of $v$ is a $5_3$-vertex. Let $v_1,v_2,\ldots,v_k$ be the    neighbors of $v$ and  $v_1,v_2,\ldots,v_{\frac{3k}{2}-5}$ be convenient and $v_1$ be a $5_3$-vertex. Let $G'$ be the graph obtained from $G$ by
splitting edge $vv_1$ with a $2$-vertex $z$. Since   the number of $4^+$-vertices in $G'$ does not increase and $4^+$-vertices $v$ and $v_1$ have fewer $4^+$-neighbors in
$G'$, there is an odd $7$-coloring $c$ of $G'$ by the minimality of $G$. Color each vertex in $G$ with the same color as in $G'$.  Since $v_1$ is convenient, $v_1$ has an odd coloring by Lemma \ref{conven}. If $v$ has an odd coloring, then $G$ is odd $7$-colorable, a contradiction. Thus, $v$ has no odd coloring.  Since   $z$ has an odd coloring in $G'$, $c(v)\neq c(v_1)$. We assume $c(v)=1,c(v_1)=2$. We may assume that $\{c(v_1),c(v_2),\ldots,c(v_k)\}=\{2,3,\ldots,\frac{k}{2}+1\}$ since $v$ has no odd coloring. Let 
$v_1'$ be the $4^+$-neighbor of $v_1$ and $v_1'',v_1''',v_1''''$ be  $2$-neighbors of $v_1$. Then Let 
$\{c_1,c_2,c_3,c_4,c_5\}=\{c(v_1'),c_o(v_1'),c_o(v_1''),c_o(v_1'''),c_o(v_1'''')\}$. If $3,4,5,6$ or $7$ is not in the set $\{c_1,c_2,c_3,c_4,c_5\}$, then recolor $v_1$ with color $3,4,5,6$ or $7$ respectively. Then $v$ has an odd coloring, a contradiction. Thus, $\{c_1,c_2,c_3,c_4,c_5\}=\{3,4,5,6,7\}$. Then recolor $v$ with color in $[7]\setminus\{1,2,3,\ldots,\frac{k}{2}+1,c_o(v_{\frac{3k}{2}-5+1}),\ldots,c_o(v_k)\}$, $v_1$ with $1$. Note that if $k=4$, then $\frac{3k}{2}-5=1$, then $\{1,2,3,\ldots,\frac{k}{2}+1,c_o(v_{\frac{3k}{2}-5+1}),\ldots,c_o(v_k)\}=\{1,2,3,
c_o(v_2),\\ c_o(v_3),c_o(v_4)\}$; if $k=6$, then $\frac{3k}{2}-5=4$, then $\{1,2,3,\ldots,\frac{k}{2}+1,c_o(v_{\frac{3k}{2}-5+1}),\ldots,\\ c_o(v_k)\}=\{1,2,3,4,
c_o(v_5),c_o(v_6)\}$; if $k=8$, then $\frac{3k}{2}-5=7$, then $\{1,2,3,\ldots,\\ \frac{k}{2}+1,c_o(v_{\frac{3k}{2}-5+1}),\ldots,c_o(v_k)\}=\{1,2,3,4,5,
c_o(v_8)\}$; if $k=10$, then $\frac{3k}{2}-5=10$, then $\{1,2,3,\ldots,\frac{k}{2}+1,c_o(v_{\frac{3k}{2}-5+1}),\ldots,c_o(v_k)\}=\{1,2,3,4,5,
6\}$. In each case, $v$ has at least one color. Then $v$ has an odd color $1$, a contradiction.
\end{proof}
\begin{lem}\label{5-f}
 Let  $uwv$ be a $2$-path, $w$ be a $2$-vertex,  $f_1$ and $f_2$ be two faces incident with $uwv$. If $u$ and $v$  are   $5_3$-vertices,   then at most one of $f_1$ and $f_2$ is a $4$-face.  
  \end{lem}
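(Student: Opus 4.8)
The plan is to argue by contradiction: assume that both $f_1$ and $f_2$ are $4$-faces. Since $G$ has no $3$-cycles and $w$ is a $2$-vertex incident with both faces, we may write $f_1=[uwvx]$ and $f_2=[uwvy]$, so that $x$ and $y$ are common neighbors of $u$ and $v$ with $x\neq y$; triangle-freeness also forces $u\not\sim v$ and $x\not\sim y$. Thus $u$ and $v$ share the three common neighbors $w,x,y$. I would then delete $w$ to form $G'=G-w$. Because $w$ is a $2$-vertex, $G'$ is again a toroidal graph without $3$-cycles having the same number of $4^+$-vertices and the same number of $4^+$-neighbors of $4^+$-vertices as $G$, but one fewer edge; hence by minimality $G'$ has an odd $7$-coloring $c$.

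Since $u$ and $v$ are $5$-vertices, they have odd degree and therefore automatically possess an odd color in every proper coloring. Consequently, to extend $c$ to an odd $7$-coloring of $G$ it suffices to color $w$ properly with a color realizing an odd color, and such a color exists precisely when $c(u)\neq c(v)$ (then any color of $[7]\setminus\{c(u),c(v)\}$ makes $w$ see two distinct colors once each). So the whole proof reduces to guaranteeing $c(u)\neq c(v)$, after recoloring if necessary.

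First I would dispose of the easy situations. If either $x$ or $y$ is a $2$-vertex, then in $G'$ that vertex has exactly the two neighbors $u$ and $v$, and its odd color forces $c(u)\neq c(v)$ immediately. Hence I may assume $x$ and $y$ are both $4^+$-vertices; as neighbors of the convenient vertex $u$ they are non-convenient by Lemma~\ref{tool}(2), so they have even degree and no $2$-neighbor, and Lemma~\ref{6-v} with $k=4$ rules out degree $4$ (a non-convenient $4$-vertex adjacent to the convenient $5_3$-vertex $u$ is forbidden). Thus $x$ and $y$ are non-convenient $6^+$-vertices. Since $u$ is a $5_3$-vertex whose three $2$-neighbors lie among its five neighbors $w,x,y,a,b$ and neither $x$ nor $y$ is a $2$-vertex, the remaining two neighbors $a,b$ of $u$ must both be $2$-vertices; symmetrically the two neighbors $c,d$ of $v$ outside $\{w,x,y\}$ are $2$-vertices.

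It remains to handle the case $c(u)=c(v)=\alpha$, where I would break the tie by recoloring $u$. Because the only non-$2$-neighbors of $u$ are $x$ and $y$, a new color $\beta$ for $u$ need only avoid $c(x),c(y),\alpha$ together with the colors whose use would kill the odd colors of the $2$-neighbors $a,b$ (the colors of their far endpoints), leaving at least two candidates; the $2$-vertices $a,b$ can then be recolored to stay proper and odd. The main obstacle is exactly the pair $x,y$: they are non-convenient even vertices adjacent to both $u$ and $v$, so changing $c(u)$ from $\alpha$ to $\beta$ flips the parities of $\alpha$ and $\beta$ in the neighborhoods of $x$ and $y$ and may destroy their odd colors, and since $x,y$ have no $2$-neighbor they cannot be repaired locally. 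My plan is to use the available slack---the freedom in $\beta$, the high even degree of $x$ and $y$, the flexibility of the four $2$-vertices $a,b,c,d$, and if necessary a simultaneous recoloring of $v$---to choose the new color(s) so that $x$ and $y$ retain an odd color. Once $c(u)\neq c(v)$ is secured, coloring $w$ yields an odd $7$-coloring of $G$, contradicting the choice of $G$.
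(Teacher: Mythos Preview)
Your reduction $G'=G-w$ and the structural analysis up to the point where $c(u)=c(v)=\alpha$ are fine (the minimality applies since the first two invariants are unchanged and $|E|$ drops by two, not one, but that is harmless). The gap is the last paragraph: you assert there is ``enough slack'' to recolor $u$ to some $\beta\neq\alpha$ while keeping the odd-color condition at $x$ and $y$, but you never verify this, and a direct count shows it can fail. For properness and for the $2$-vertices $a,b$ to keep an odd color you already need $\beta\notin\{c(x),c(y),\alpha,c(a'),c(b')\}$, where $a',b'$ are the far endpoints of $a,b$. For $x$, changing one neighbor from $\alpha$ to $\beta$ flips the parities of exactly $\alpha$ and $\beta$ in $N(x)$; if the odd colors of $x$ in $c$ are precisely $\{\alpha,\gamma_x\}$ (which is possible, e.g.\ when $d(x)=6$ and the neighbor colors are $\alpha,\alpha,\alpha,\gamma_x,\delta,\delta$), then $\beta=\gamma_x$ destroys every odd color of $x$. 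The same gives a forbidden $\gamma_y$ for $y$. That is up to seven forbidden colors for $\beta\in[7]$, so there need not be any valid choice, and the vague appeal to ``high even degree'' or ``simultaneous recoloring of $v$'' does not resolve this without a genuine case analysis.

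The paper sidesteps this difficulty by a different reduction: instead of deleting $w$, it splits the edge $xv$ with a new $2$-vertex $z$. In the resulting $G'$ both $w$ and $z$ are $2$-vertices, so $c(u)\neq c(v)$ and $c(x)\neq c(v)$ are obtained for free; passing back to $G$ only disturbs the neighborhood of the single vertex $x$, and the proof then does an explicit case split on whether $c(y)=c(x)$ to repair $x$'s odd color by recoloring $u$ (and, in one subcase, $v$). Your approach leaves two vertices $x,y$ to repair simultaneously while also manufacturing $c(u)\neq c(v)$, which is why the counting becomes tight; if you want to push your route through, you would need a comparably detailed case analysis rather than the sketch you give.
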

\begin{proof}
  Suppose otherwise that $f_1$ and $f_2$ are   $4$-faces. Let $x$ and $y$ be the left vertex of $f_1$ and $f_2$ respectively. Since $u$ and $v$ are convenient, $x$ and $y$ are non-convenient $6^+$-vertices by Lemma \ref{tool}(2)(3). Let $G'$ be the graph obtained from $G$ by
splitting edge $xv$ with a $2$-vertex $z$. Since the number of  $4^+$-vertices does not increase and $4^+$-vertices $x$ and $v$ have fewer $4^+$-neighbors in
$G'$, there is an odd $7$-coloring $c$ of $G'$ by the minimality of $G$. Color each vertex in $G$ with the same color as in $G'$.  Since $v$ is convenient, $v$ has an odd coloring by Lemma \ref{conven}. If $x$ has an odd coloring, then $G$ is odd $7$-colorable, a contradiction. Thus, $x$ has no odd coloring.  Since $z$ and $w$ have an odd coloring in $G'$, $c(x)\neq c(v)\neq c(u)$. We assume $c(x)=1,c(u)=2,c(v)=3$
Let $u',u''$ be other $2$-neighbors of $u$ and   $v',v''$ be other $2$-neighbors of $v$,  $\{c_1,c_2\}=\{c_o(u'),c_o(u'')\}$ and $\{c_3,c_4\}=\{c_o(v'),c_o(v'')\}$.

 First assume that $c(y)=c(x)$. Then recolor $u$ with the color in $[7]\setminus\{1,2,3, c_o(y),c_1,c_2\}$, $w$ with different color from the current color of $u$ and $v$. Then $G$ has an odd $7$-coloring, a contradiction.

 Next assume that   $c(y)\neq c(x)$. We assume that $c(y)=4$. If $c_1\in\{1,2,3,4\}$, then recolor $u$ with the color in $[7]\setminus\{1,2,3,4,c_2,c_o(y)\}$, $w$ with different color with current color of $u$ and $v$. Then $G$ has an odd $7$-coloring, a contradiction. Thus, $c_1\notin\{1,2,3,4\}$. By symmetry,  $c_2\notin\{1,2,3,4\}$. We assume that $c_1=5,c_2=6$. Then recolor $u$ with the color $7$, and recolor $w$ with a different color from the current color of $u$ and $v$. In this case, if $y$ has an odd coloring, then $G$ has an odd $7$-coloring, a contradiction. Thus, $y$ has no odd color in the case of $u$ with color $7$ and $v$ with color $3$. Then $y$ has an odd number of neighbors ( other than  $u$ and $v$) with color $3$ and $7$. Then recolor $v$ with a color in $[7]\setminus\{1,2,3,4,c_3,c_4,\}$ and keep color of $u$  with $2$, recolor $w$ with a different color from the current color of $u$ and $v$. Then $y$ has an odd color $3$ and $x$ has an odd color $c(v)$. a contradiction. 
 \end{proof}

\begin{lem}\label{4_0-face}
   There is no $(4_0,4_0,4_0,4_0)$-face $f$ in $G$. 
\end{lem}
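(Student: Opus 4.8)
The plan is to show that such a face cannot survive in the minimal counterexample $G$: I would delete its four vertices, reinsert a short gadget to control oddness, and then recolour.

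First I would record the local structure. Suppose $f=[v_1v_2v_3v_4]$ is a $(4_0,4_0,4_0,4_0)$-face. Since $G$ has no $3$-cycles, the diagonals $v_1v_3$ and $v_2v_4$ are non-edges, and two consecutive face-vertices share no neighbour off $f$ (a common neighbour would close a triangle). Write $a_i,b_i$ for the two neighbours of $v_i$ outside $f$. Each $v_i$ is an even vertex with no $2$-neighbour, hence non-convenient; by Lemma~\ref{tool}(3) a $4$-vertex has at most one convenient-or-$2$-neighbour, and by Lemma~\ref{n-c-4-v} it cannot have \emph{exactly} one convenient neighbour (that would force three non-convenient $6^+$-neighbours, impossible as the two face-neighbours are $4$-vertices). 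Thus every neighbour of every $v_i$, in particular each $a_i$ and $b_i$, is a non-convenient vertex (even, degree $\ge 4$, no $2$-neighbour).

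Next I would build $G'$ from $G$ by deleting $v_1,v_2,v_3,v_4$ and, for each $i$, inserting a new $2$-vertex $t_i$ joined to $a_i$ and $b_i$. Because $a_ib_i\notin E(G)$, no triangle is created, and since each $t_i$ merely reroutes along the former edges $a_iv_i,v_ib_i$, the graph $G'$ is again toroidal and triangle-free. Each external neighbour loses the deleted $v_i$'s but gains the corresponding $t_i$'s, so its degree is unchanged and it stays a $4^+$-vertex; hence $G'$ has exactly four fewer $4^+$-vertices than $G$. By the minimality of $G$, $G'$ has an odd $7$-colouring $c$. The extension is where the gadget pays off: each $t_i$ is a $2$-vertex, so its odd-colouring constraint forces $c(a_i)\ne c(b_i)$ for every $i$. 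I would then colour the two diagonals monochromatically, $c(v_1)=c(v_3)=\gamma$ and $c(v_2)=c(v_4)=\mu$ with $\gamma\ne\mu$. For $v_2$, say, the neighbourhood colours are $\{\gamma,\gamma,c(a_2),c(b_2)\}$ with $c(a_2)\ne c(b_2)$; such a multiset always contains a colour of odd multiplicity, so $v_2$ (and symmetrically $v_1,v_3,v_4$) has an odd colour automatically, for \emph{every} choice of $\gamma,\mu$. It then remains only to pick $\gamma,\mu$ so that $f$ is properly coloured ($\gamma\notin\{c(a_1),c(b_1),c(a_3),c(b_3)\}$, $\mu\notin\{c(a_2),c(b_2),c(a_4),c(b_4)\}$) and so that no external neighbour $w$ loses its odd colour $c_o(w)$; since returning to $G$ only exchanges $w$'s gadget neighbour $t_i$ for a face-vertex, it suffices to keep $c_o(w)$ at odd multiplicity, which holds once $\gamma$ (resp. $\mu$) avoids $c_o(w)$ and the exchanged $t_i$ did not carry $c_o(w)$.

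The hard part will be the final colour count. In the worst case the colours that $\gamma$ must avoid — up to four proper colours and up to four odd-colours of the external neighbours of $v_1,v_3$ — can approach or exceed the seven available, so one cannot simply pick $\gamma$ greedily. The remedy is a short case analysis exploiting three savings: a neighbour shared by $v_1$ and $v_3$ receives the colour $\gamma$ \emph{twice}, an even change that never disturbs its parity; the forced inequalities $c(a_i)\ne c(b_i)$ collapse several of the forbidden colours; and, in the remaining tight configurations, one either takes $\gamma=c(t_1)$, which neutralises the exchange at $a_1,b_1$ outright, or interchanges the roles of the two diagonals. I expect this parity-and-counting bookkeeping to be the only genuine obstacle, the topological check that $G'$ remains toroidal and triangle-free and the degree count being routine.
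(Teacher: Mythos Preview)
Your setup is sound: the gadget $G'$ is toroidal and triangle-free, the minimality applies because four $4^+$-vertices disappear, and the observation that with $c(v_1)=c(v_3)=\gamma$ and $c(v_2)=c(v_4)=\mu$ every $v_i$ automatically has an odd colour (thanks to $c(a_i)\ne c(b_i)$) is a genuinely nice idea.  The structural claim that every $a_i,b_i$ is non-convenient, obtained by combining Lemma~\ref{tool}(3) with Lemma~\ref{n-c-4-v}, is also correct.

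The gap is exactly where you flag it, and it is real rather than merely bookkeeping.  For a vertex $w\in\{a_1,b_1,a_3,b_3\}$ adjacent to a single face-vertex, the multiset $N_G(w)\setminus\{v_j\}$ has odd size, and there is at most one ``bad'' value of $\gamma$ (the unique odd-multiplicity colour when there is only one).  Together with the four properness constraints this gives up to eight forbidden colours for $\gamma$.  Nothing forces a collision: one can have $c(a_1),c(b_1),c(a_3),c(b_3)$ pairwise distinct and the four bad colours filling out the remaining three colours plus one repeat, so that all seven colours are forbidden.  Your listed savings do not rescue this.  The inequality $c(a_i)\ne c(b_i)$ only guarantees that the four \emph{proper} constraints contribute at least two colours, not at most some number.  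Taking $\gamma=c(t_1)$ neutralises $a_1,b_1$ but pins $\gamma$ to a single value that can easily land in $\{c(a_3),c(b_3),\text{bad}(a_3),\text{bad}(b_3)\}$; the symmetric choice $\gamma=c(t_3)$ can fail for the same reason on the $1$-side, and both can fail simultaneously.  The shared-neighbour saving only applies when such a neighbour exists.  Once the diagonal pattern is abandoned you lose the automatic odd colour at the $v_i$'s, so there is no cheap fallback.

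By contrast, the paper deletes only $v_1,v_2$ and inserts three $2$-paths among $\{v_1',v_1'',v_4\}$ and three among $\{v_2',v_2'',v_3\}$, forcing each triple to be rainbow.  Then $v_2$ is coloured from the single colour outside $\{c(v_2'),c_o(v_2'),c(v_2''),c_o(v_2''),c(v_3),c_o(v_3)\}$, after which $v_1$ is coloured avoiding six colours, and a final swap of $c(v_1)$ with $c(v_4)$ handles the one residual case where $v_4$ loses its odd colour.  The key difference is that keeping $v_3,v_4$ in the graph means their outside neighbours never see a change, so the odd-preservation burden is carried by only \emph{three} vertices per newly coloured $v_i$ rather than four --- exactly the margin that makes a six-colour list suffice.
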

\begin{proof}
  Suppose otherwise that $G$ has a $(4_0,4_0,4_0,4_0)$-face $f$. Let $f=[v_1v_2v_3v_4]$ and $v_i',v_i''$ be the other neighbors of $v_i$ for $i=1,2,3,4$. Let $G'$ be the graph obtained from $G-\{v_1,v_2\}$  by adding $2$-paths $v_1'x_1v_1'',v_1'x_2v_4,v_1''x_3v_4,v_2'y_1v_2'',v_2'y_2v_3$ and $v_2''y_3v_3$ where $x_j$ and $y_j$ are $2$-vertices for $i,j=1,2,3$.  
  Since the distance of vertices $v_1',v_1',v_4$ and  $v_2',v_2',v_3$ does not decrease in $G'$, $G'$ has no $3$-cycles.  Since $G'$ has fewer $4^+$-vertices than $G$, $G'$ has an odd $7$-coloring $c$ by the minimality. Since $x_j$ and $y_j$ are $2$-vertices for $j=1,2,3$, $c(v_1')\neq c(v_1'')\neq c(v_4)$ and $c(v_2')\neq c(v_2'')\neq c(v_3)$. Color $v_2$ with color in $[7]\setminus \{c(v_2'),c_o(v_2'),c(v_2''),c_o(v_2''), c(v_3),c_o(v_3)\}$. Then $v_2$ must have an odd color regardless the color of $v_1$ since $c(v_2')\neq c(v_2'')\neq c(v_3)$.  If $c(v_2)\in \{c(v_1'),c_o(v_1'),c(v_1''),c_o(v_1''), c(v_4)\}$, then color $v_1$ with color in $[7]\setminus \{c(v_1'),c_o(v_1'),c(v_1''),c_o(v_1''),\\ c(v_4),c_o(v_4)\}$. Then $G$ has an odd $7$-coloring, a contradiction. Thus,  $c(v_2)\notin \{c(v_1'),c_o(v_1'),\\c(v_1''),c_o(v_1''), c(v_4)\}$. Then    color $v_1$  with color in $[7]\setminus \{c(v_1'),c_o(v_1'),c(v_1''),c_o(v_1''), c(v_4),c(v_2)\}$. In this case, $v_1$ must have an odd color since $c(v_1')\neq c(v_1'')\neq c(v_4)$.  If $v_4$ has an odd color, then $G$ has an odd $7$-coloring, a contradiction. Thus, $v_4$ has no odd color. Then the all neighbors of $v_4$ occupy two colors. We assume  these two colors are $c_1$ and $c_2$. Then we recolor $v_1$ with the current color of $v_4$, recolor $v_4$ with color in $[7]\setminus \{c_1,c_2,c_o(v_4'),c_o(v_4''), c_o(v_3),c(v_1)\}$, where $c(v_1)$ is the current color of $v_1$.  Since $c(v_1')\neq c(v_1'')\neq c(v_2)$, $v_1$ has an odd color. Since we recolor $v_1$, $v_4$ has an  odd color. Then $G$ has an odd $7$-coloring, a contradiction.
\end{proof}


 \section{Proof of Theorem~\ref{th1}}
  We are now ready to  complete the proof of Theorem~\ref{th1}.   Let each   $v\in V(G)$ has an initial charge of $\mu(v)=2d(v)-6$, each $f\in\cup F(G)$ have an initial charge of $\mu(f)=d(f)-6$.   By Euler's Formula, $|V(G)|+|F(G)|-|E(G)|\geq 0$. Then $\sum_{v\in V }\mu(v)+\sum_{f\in F }\mu(f)=0$.

Let $\mu^*(x)$ be the charge of $x\in V(G)\cup F(G)$ after the discharge procedure. To lead to a contradiction, we shall prove that $\sum_{x\in V(G)\cup F(G)}  \mu^*(x) > 0$.  Since the total sum of charges is unchanged in the discharge procedure, this contradiction proves Theorem~\ref{th1}.


  A $2$-vertex is {\em bad} if it is incident with two $4$-faces, {\em semi-bad} if it is incident with exactly one $4$-face, {\em non-bad} otherwise. A $5_3$-vertex $v$ is {\em poor} to $u$ if $u$ is non-convenient and two faces incident with $uv$ are $4_1$-faces.

  {\bf {Discharging Rules}}
  \begin{enumerate}[(R1)]
    \item Every vertex sends $\frac{1}{2}$ to each incident $4$-face; every non-convenient vertex sends $\frac{1}{2}$ to each incident $5$-face.
    \item Every $4^+$-vertex sends $\frac{3}{2}$ to each adjacent bad $2$-vertex, $\frac{5}{4}$ to each adjacent semi-bad $2$-vertex, $1$ to each adjacent non-bad $2$-vertex.
    \item Every non-convenient vertex  $v$ sends $1$ to each adjacent poor $5_3$-vertex,  $\frac{1}{2}$ to each other adjacent convenient vertex.

  \item Every non-convenient $6^+$-vertex sends $\frac{1}{4}$ to each adjacent  non-convenient $4$-vertex  which is adjacent to one non-convenient vertex.
  \end{enumerate}
  Next, we check the final charge of each vertex and each face has non-negative charge.
  Let $v_1,v_2,\ldots,v_k$ be  neighbors of $k$-vertex $v$, $f_i$ be the face incident with $v_i,v$ and $v_{i+1}$ and  $u_i$ be the other neighbor if $v_i$ is a $2$-vertex  for $i=\{1,2,\ldots,k\}$, especially force $i+1=1$ if $i=k$.
\begin{enumerate}[1.]
  \item Let $f$ be a $4$-face. By (R1), each vertex sends $\frac{1}{2}$ to $f$. Thus, $\mu^*(f)=4-6+\frac{1}{2}\times4=0$.
  \item Let $f$ be a $5$-face. By Lemma \ref{tool}(1)(2), $f$ is incident with at least two non-convenient vertex. By (R1), each non-convenient vertex sends $\frac{1}{2}$ to $f$. Thus, $\mu^*(f)\geq 5-6+\frac{1}{2}\times2=0$.
  \item Let $f$ be a $6^+$-face. By Rules, $f$ is not involved any discharging. Thus, $\mu^*(f)\geq d(f)-6\geq0$.
  \item Let $v$ be a $2$-vertex. By Lemma \ref{tool}(1), $v$ is not adjacent to any $2$-vertex. By Lemma \ref{no-3-v}, each neighbor of $v$ is a $4^+$-vertex. If $v$ is bad, then $v$ is incident with two $4$-faces. By (R1) and (R2), $v$ sends $\frac{1}{2}$ to each incident $4$-face and each neighbor of $v$  sends $\frac{3}{2}$ to $v$. Thus,  $\mu^*(f)=2\times2-6-\frac{1}{2}\times2+\frac{3}{2}\times2=0$. If $v$ is semi-bad, then $v$ is incident with one $4$-face. By (R1) and (R2), $v$ sends $\frac{1}{2}$ to each incident $4$-face and each neighbor of $v$  sends $\frac{5}{4}$ to $v$. Thus,  $\mu^*(f)=2\times2-6-\frac{1}{2}+\frac{5}{4}\times2=0$. If $v$ is non-bad, then $v$ is incident with two $5^+$-faces. By   (R2),   each neighbor of $v$  sends $1$ to $v$. Thus,  $\mu^*(v)=2\times2-6+1\times2=0$.
  \item Let $v$ be a $4$-vertex. By (R1), $v$  sends at most $\frac{1}{2}$ to each incident face.  By Lemma \ref{tool}(3), $v$ is adjacent to at most one $2$-vertex or convenient vertex. If $v$ is adjacent to  a $2$-vertex, then $v$ is convenient and  adjacent to three non-convenient  vertices.  By (R2), $v$ sends at most $\frac{3}{2}$ to $2$-neighbor. By (R3), each non-convenient neighbor of $v$ sends $\frac{1}{2}$ to $v$. Thus, $\mu^*(v)=2\times4-6-\frac{1}{2}\times4-\frac{3}{2}+\frac{1}{2}\times3=0$. If $v$ is not adjacent to  any $2$-vertex, then $v$ is non-convenient  and  adjacent to at most one  convenient  vertex. If $v$ is adjacent to   one  convenient  vertex, then $v$ is adjacent to three non-convenient $6^+$-vertices by Lemma \ref{n-c-4-v}.  By (R3) and (R4), each   $v$ sends $\frac{1}{2}$ to convenient neighbor and each non-convenient $6^+$-neighbors sends $\frac{1}{4}$ to $v$. Thus, $\mu^*(v)=2\times4-6-\frac{1}{2}\times4-\frac{1}{2}+\frac{1}{4}\times3>0$. If $v$ is not adjacent to   any  convenient  vertex,  then $\mu^*(v)=2\times4-6-\frac{1}{2}\times4=0$.
  \item Let $v$ be a $5$-vertex. Then $v$ is convenient.    By (R1), $v$  sends   $\frac{1}{2}$ to each incident $4$-face.  If $v$ is not adjacent to any $2$-vertex, then $\mu^*(v)\geq2\times5-6-\frac{1}{2}\times5>0$. By Lemma \ref{tool}(3), $v$ is adjacent to at most three $2$-vertices. If $v$ is adjacent to at most two $2$-vertices, then $v$ is adjacent to three non-convenient vertices by Lemma \ref{tool}(2).  By (R2), $v$ sends at most $\frac{3}{2}$ to $2$-neighbor. By (R3), each non-convenient neighbor of $v$ sends $\frac{1}{2}$ to $v$. Thus, $\mu^*(v)\geq2\times5-6-\frac{1}{2}\times5-\frac{3}{2}\times2+\frac{1}{2}\times3=0$.
      If $v$ is adjacent to three $2$-vertices, then  $v_1,v_2,v_3$   or $v_1,v_2,v_4$ are $2$-vertices by symmetry. In the former case,  $f_1$ and $f_2$ are $6^+$-faces by Lemma \ref{4-f} and $v_1$ and $v_3$ are not bad and $v_2$ is non-bad. Then $v$ is incident with at most three $4$-faces. By (R2), $v$ sends at most $\frac{5}{4}$ to each of $v_1$ and $v_3$, $1$ to $v_2$. By Lemma \ref{tool}(2), $v_4$ and $v_5$ are non-convenient. By (R3), each of  $v_4$ and $v_5$ sends $\frac{1}{2}$ to $v$. Thus, $\mu^*(v)\geq2\times5-6-\frac{1}{2}\times3-\frac{5}{4}\times2-1+\frac{1}{2}\times2=0$.
      In the latter case,   $f_1$ is a $6^+$-face by Lemma \ref{4-f} and $v_3$ and $v_5$ are non-convenient vertices. If each of $f_2,f_3,f_4,f_6$ is a $4$-face, then $v$ is poor to $v_3$ and $v_5$, each of $v_1$ and $v_2$ is semi-bad  and $v_4$ is bad. By (R2), $v$ sends $\frac{5}{4}$ to each of $v_1$ and $v_2$, $\frac{3}{2}$ to $v_4$. By (R3), each of $v_3$ and $v_5$ sends $1$ to $v$. Thus, $\mu^*(v)\geq2\times5-6-\frac{1}{2}\times4-
      \frac{5}{4}\times2-\frac{3}{2}+1\times2=0$.
      If three of $f_2,f_3,f_4,f_6$ are $4$-faces,  then at most one of $v_3$ and $v_5$ is poor to $v$.  Thus, $\mu^*(v)\geq2\times5-6-\frac{1}{2}\times3-
      \frac{5}{4}\times2-\frac{3}{2}+1+\frac{1}{2}=0$.
      If at most two of $f_2,f_3,f_4,f_6$ are   $4$-faces,  then  $\mu^*(v)\geq2\times5-6-\frac{1}{2}\times2-
      \frac{5}{4}\times2-\frac{3}{2}+\frac{1}{2}\times2=0$.
  \item Let $v$ be a $6^+$-vertex, we use $k_j$-vertex to denote $v$, where $k\ge 6$, $0\le j\le k$. 

      If $j=k$, then each face incident with $v$ is a $6^+$-face by Lemma \ref{4-f}. Then each $2$-neighbor of $v$ is non-bad. By (R2), $v$ sends $1$ to each $2$-neighbor. Thus,  $\mu^*(v)\geq 2k-6-k\geq0$, equal holds when $k=6$. Note that $6_6$-vertex does not exist by Lemma~\ref{tool}(3), $\mu^*(v)>0$.
     
            If $0< j< k$, 
  then $v$ is convenient and adjacent to $k-j$ non-convenient vertices when $j\notin \{0, k\}$. By (R3), each non-convenient neighbor of $v$ sends $\frac{1}{2}$ to $v$.
If $1\leq j\leq \frac{k}{2}$, then each $2$-neighbor of $v$ can be bad and each face incident with $v$ can be $4$- or $5$-face. By (R1) and (R2),  $v$ sends at most $\frac{3}{2}$ to each $2$-neighbor and at most $\frac{1}{2}$ to each incident face. Thus,  $\mu^*(v)\geq 2k-6+\frac{k-j}{2}-\frac{3j}{2}-\frac{k}{2}\geq\frac{5k}{4}-6>0$.
If $\frac{k}{2}< j<k$, then $v$ is adjacent to at least $j-\frac{k}{2}$  $2$-vertices $v_i$ such that $v_{i-1}$ and $v_{i+1}$ $2$-vertices (We get this by pigeon principle: arrange $2$-vertex/$4^+$-vertex in cross, then change $4^+$-vertex to $2$-vertex). Thus, $v$ is incident with at least $2(j-\frac{k}{2})$ $6^+$-faces by Lemma \ref{4-f}. Then $v$ is adjacent to at least $j-\frac{k}{2}$  non-bad $2$-neighbors, at most $2(j-\frac{k}{2})$ semi-bad $2$-neighbors. Then $v$ is incident with at most $k-2(j-\frac{k}{2})=2k-2j$ $5^-$-faces, at most $j-(j-\frac{k}{2})-2(j-\frac{k}{2})=\frac{3k}{2}-2j$ bad $2$-neighbors. By (R1), (R2) and (R3), $\mu^*(v)\geq 2k-6+\frac{k-j}{2}-(j-\frac{k}{2})-\frac{5}{4}\times2(j-\frac{k}{2})
      -\frac{3}{2}\times (\frac{3k}{2}-2j)-\frac{1}{2}\times (2k-2j) \geq k-6>0$.

      If $j=0$, then $v$ is not adjacent to any $2$-neighbors. $v$ is convenient if $v$ is an odd vertex and non-convenient otherwise. If $v$ is an odd vertex, then $v$ sends at most $\frac{1}{2}$ to each incident face by (R1). Thus, $\mu^*(v)\geq 2d(v)-6-\frac{d(v)}{2}>0$. If $v$ is an even vertex, then $v$ sends at most $1$ to each convenient neighbor by (R3) and (R4). Thus, $\mu^*(v)\geq 2d(v)-6-\frac{d(v)}{2}-d(v)\geq0$. Then we analyze the cases $k=6, 8, 10$, separately.

   If  $k=10$, then if $v$ is adjacent to  ten  convenient neighbors, then $v$ is not adjacent to any $5_3$-vertex by Lemma \ref{6-v}. Then $v$ sends at most $\frac{1}{2}$ to each  convenient neighbor. Thus, $\mu^*(v)\geq 2\times8-6-\frac{d(v)}{2}-\frac{d(v)}{2}>0$. If $v$ is adjacent to nine  convenient neighbors, then $v$ is adjacent to one non-convenient neighbor.   We assume that  $v_i$   is non-convenient.  Then $v_{i-1}$ and $v_{i+1}$ are not poor to $v$. Otherwise $f_{i-1}$ or  $f_i$ is a $4_1$-face. Then $v_i$ is convenient, a contradiction.  Thus, $v$ sends at most $\frac{1}{2}$ to $v_{i-1}$ and $v_{i+1}$ by (R3). By (R4), $v$ sends at most $\frac{1}{4}$ to $v_i$. Thus, $\mu^*(v)\geq 2\times10-6-\frac{d(v)}{2}-7\times1-\frac{1}{2}\times2-\frac{1}{4}>0$.  If $v$ is   adjacent to  at most eight convenient neighbors,  then $v$ sends at most $1$ to convenient neighbors, at most $\frac{1}{4}$ to non-convenient neighbors by (R3) and (R4). Thus,  $\mu^*(v)\geq 2\times10-6-\frac{10}{2}-8\times1-\frac{1}{4}\times2>0$.

       If $k=8$, then if $v$ is  adjacent to at least seven  convenient neighbors, then $v$ is not adjacent to any $5_3$-vertex by Lemma \ref{6-v}. Then $v$ sends at most $\frac{1}{2}$ to each  convenient neighbor. Thus, $\mu^*(v)\geq 2\times8-6-\frac{8}{2}-\frac{8}{2}>0$. If $v$ is adjacent to   six convenient neighbors,  then $v$ is adjacent to two non-convenient neighbors.  We assume that  $v_i$ and  $v_j$ are non-convenient where $i<j$.  Then $v_{i-1}$ is not poor to $v$. Otherwise $f_{i-1}$ is a $4_1$-face. Then $v_i$ is convenient, a contradiction. By symmetry, $v_{j+1}$ is not poor to $v$. Thus, $v$ sends at most $\frac{1}{2}$ to $v_{i-1}$ and $v_{j+1}$ by (R3). By (R4), $v$ sends at most $\frac{1}{4}$ to $v_i$ and $v_j$. Thus, $\mu^*(v)\geq 2\times8-6-\frac{8}{2}-4\times1-\frac{1}{2}\times2-\frac{1}{4}\times2>0$. If $v$ is   adjacent to  at most five convenient neighbors,  then $v$ sends at most $1$ to convenient neighbors, at most $\frac{1}{4}$ to non-convenient neighbors by (R3) and (R4). Thus,  $\mu^*(v)\geq 2\times8-6-\frac{8}{2}-5\times1-\frac{1}{4}\times3>0$.

       If $k=6$, then by Lemma \ref{tool}(3), $v$ is adjacent to at most five convenient neighbors. If $v$ is  adjacent to   four or five convenient neighbors, then $v$ is not adjacent to any $5_3$-vertex by Lemma \ref{6-v}. By (R3) and (R4), $v$ sends at most $\frac{1}{2}$ to each  convenient neighbor, at most $\frac{1}{4}$ to each non-convenient neighbor. Thus, $\mu^*(v)\geq 2\times6-6-\frac{6}{2}-\{\frac{1}{2}\times5+\frac{1}{4}, \frac{1}{2}\times4+\frac{1}{4}\times2\} >0$. If $v$ is adjacent to   three  convenient neighbors,  then $v$ is adjacent to three non-convenient neighbors.  We assume that  $v_i$ and  $v_j$ are non-convenient where $i<j$.  Then $v_{i-1}$ is not poor to $v$. Otherwise $f_{i-1}$ is a $4_1$-face. Then $v_i$ is convenient, a contradiction. By Symmetry, $v_{j+1}$ is not poor to $v$. Thus, $v$ sends at most $\frac{1}{2}$ to $v_{i-1}$ and $v_{j+1}$ by (R3). By (R4), $v$ sends at most $\frac{1}{4}$ to $v_i$ and $v_j$. Then $v$ is adjacent to at most one poor $5_3$-vertex. Thus, $\mu^*(v)\geq 2\times6-6-\frac{6}{2}-1-\frac{1}{2}\times2-\frac{1}{4}\times3>0$. If $v$ is   adjacent to  at most two convenient neighbors,  then $v$   is not adjacent to any poor $5_3$-vertex.  Thus,  $\mu^*(v)\geq 2\times8-6-\frac{6}{2}- \frac{1}{2}\times2-\frac{1}{4}\times4>0$.
  \end{enumerate}

  \begin{lem}\label{12-vert}
 If $G$ has a non-convenient $12$-vertex $v$ and $v$ is   adjacent to twelve poor $5_3$-vertices, then $\sum_{v\in V }\mu^*(v)+\sum_{f\in F }\mu^*(f)>0$.
\end{lem}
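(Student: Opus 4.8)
\textbf{The plan} is to use the hypothesis to pin down the local structure around $v$, to observe that $v$ itself is charge-tight, and then to extract strictly positive charge from its twelve $5_3$-neighbours; this suffices because every other vertex and face already has non-negative final charge by the case analysis preceding this lemma.

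First I would unwind the definition of \emph{poor}. Since each neighbour $v_i$ $(1\le i\le 12)$ is a $5_3$-vertex poor to the non-convenient vertex $v$, the two faces incident with the edge $vv_i$ are $4_1$-faces. As this holds for every $i$, all twelve faces $f_1,\dots,f_{12}$ around $v$ are $4_1$-faces; writing $f_i$ for the face bounded by $vv_i$ and $vv_{i+1}$, we get $f_i=[v\,v_i\,w_i\,v_{i+1}]$ where $w_i$ is a $2$-vertex adjacent to $v_i$ and $v_{i+1}$, since the remaining vertex of the $4_1$-face can be neither $v$ nor a $5$-vertex. A direct count using (R1) and (R3) then gives $\mu^*(v)=2\cdot 12-6-\frac{1}{2}\cdot 12-1\cdot 12=0$, so $v$ contributes nothing and I must locate surplus charge nearby.

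The surplus will come from the vertices $v_i$ themselves. Fix one such $v_i$. Its five neighbours are $v$, the two $2$-vertices $w_{i-1},w_i$ arising from the incident $4_1$-faces $f_{i-1},f_i$, a third $2$-vertex $z_i$, and one $4^+$-vertex $u_i$. Because $v_i$ is convenient, Lemma~\ref{tool}(2) forces $u_i$ to be non-convenient, so by (R3) the vertex $v_i$ receives at least $\frac{1}{2}$ from $u_i$ and at least $1$ from $v$ (as $v_i$ is poor to $v$). Applying Lemma~\ref{5-f} to the $2$-paths $v_{i-1}w_{i-1}v_i$ and $v_iw_iv_{i+1}$, the face on the side of $w_{i-1}$ (resp.\ $w_i$) opposite to $f_{i-1}$ (resp.\ $f_i$) is a $5^+$-face; hence $w_{i-1}$ and $w_i$ are semi-bad and $z_i$ is incident with at most one $4$-face. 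Thus $v_i$ is incident with at most three $4$-faces, namely $f_{i-1}$, $f_i$, and possibly the face between $z_i$ and $u_i$, so by (R1)--(R2) it sends $\frac{1}{2}$ to each of these, $\frac{5}{4}$ to each of $w_{i-1},w_i$, and at most $\frac{5}{4}$ to $z_i$. Splitting into the two sub-cases according to whether the face between $z_i$ and $u_i$ is a $4$-face (so $z_i$ is semi-bad) or a $5^+$-face (so $z_i$ is non-bad), a short computation gives $\mu^*(v_i)\ge 4-\frac{21}{4}+\frac{3}{2}=\frac{1}{4}$ and $\mu^*(v_i)\ge 4-\frac{9}{2}+\frac{3}{2}=1$ respectively, so $\mu^*(v_i)\ge\frac{1}{4}>0$ in all cases.

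To finish, the twelve vertices $v_1,\dots,v_{12}$ are distinct, each with $\mu^*(v_i)\ge\frac{1}{4}$, while every remaining vertex and face has non-negative final charge by the analysis above; hence $\sum_{x\in V(G)\cup F(G)}\mu^*(x)\ge 12\cdot\frac{1}{4}=3>0$. \textbf{The main obstacle} is the local face analysis around each $v_i$: one must invoke Lemma~\ref{5-f} correctly to guarantee that the two ``outer'' faces at $w_{i-1}$ and $w_i$ are $5^+$-faces, so that these $2$-vertices are only semi-bad and $z_i$ meets at most one $4$-face, and then verify the charge bound uniformly over the position of $z_i,u_i$ and the badness of $z_i$. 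It is exactly here that overcounting the $4$-faces incident with $v_i$, or wrongly permitting $u_i$ to be convenient, would break the estimate.
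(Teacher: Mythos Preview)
Your argument is correct and follows essentially the same route as the paper: use Lemma~\ref{5-f} on the paths $v_{i-1}w_{i-1}v_i$ and $v_iw_iv_{i+1}$ to force the outer faces at $w_{i-1},w_i$ to be $5^+$-faces, deduce that each $2$-neighbour of $v_i$ is at most semi-bad and that $v_i$ meets at most three $4$-faces, and then compute $\mu^*(v_i)\ge\frac14$ from (R1)--(R3). Your write-up is in fact slightly more careful than the paper's, since you explicitly identify the cyclic order $w_{i-1},v,w_i,\{z_i,u_i\}$ around $v_i$ and split the two sub-cases for the face between $z_i$ and $u_i$, whereas the paper just asserts the worst-case bound directly.
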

\begin{proof}
  Let $v_1,v_2,\ldots,v_{12}$ be the neighbors of $v$, $f_i$ be the face incident with $v_ivv_{i+1}$, $u_i$ be the $2$-vertex of $f_i$, $f_i'\neq f_i$ be the face incident with $v_iu_iv_{i+1}$  for $i=\{1,2,\ldots,12\}$ and $i+1=1$ if $i=12$. Then each   of $v_i$ is a $5_3$-vertex for $i=\{1,2,\ldots,12\}$. By Lemma \ref{5-f},  each $f_i'$ is a $5^+$-face.  Since $v_i$ is a $5_3$-vertex, then at most one of $f_i'$ and $f_{i-1}'$ is a $6_3$-face. Then $v_i$ sends no charge to at least one of $f_i'$ and $f_{i-1}'$. Then each $2$-neighbors of $v_i$ is semi-bad and $v_i$ is incident with at most three $4$-faces.   Thus, $\mu^*(v_i)\geq 2\times5-\frac{5}{4}\times3-\frac{1}{2}\times3+1+\frac{1}{2}>0$. Thus, if $G$ has a non-convenient $12$-vertex $v$, then $\sum_{v\in V }\mu^*(v)+\sum_{f\in F }\mu^*(f)>0$.
\end{proof}

By Lemma~\ref{tool}, $G$ must either be formed by $k_k$-vertices for $k\geq7$, or have non-convenient vertices. In the former case, the final charge of $G$ is always positive. In the latter case, by the above checking procedure and Lemma \ref{12-vert}, $v$ is  non-convenient and $\mu^*(v)=0$ if and only if   $d(v)=4$ and $v$ is not adjacent to any convenient vertices. Thus, $G$ has only non-convenient $4$-vertices.  If $G$ has a $5$-face $f$, then $\mu^*(f)=5-6+\frac{1}{2}\times5>0$.  If $G$ has a $6^+$-face $f=[v_1v_2\ldots v_6]$, then  $\mu^*(v_i)=2\times4-6-\frac{1}{2}\times3>0$ for each $i=1,2\ldots,6$. By Lemma \ref{4_0-face}, $G$ has no $(4_0,4_0,4_0,4_0)$-face. Thus, $\sum_{v\in V }\mu^*(v)+\sum_{f\in F }\mu^*(f)>0$, a contradiction.

\small

  \end{document}